\newcommand{\supp}{\text {\rm supp}}
\def\Om{\Omega}
\def\ge{\geqslant}
\def\le{\leqslant}
\def\a{\alpha}
\def\d{\delta}
\def\o{\omega}
\def\s{\sigma}
\def\t{\tau}
\def\th{\theta}
\def\k{\kappa}
\def\l{\lambda}
\def\i{^{-1}}
\def\ZZ{\mathbb Z}
\def\NN{\mathbb N}
\def\co{\mathcal O}
\def\tW{\tilde W}
\def\tw{\tilde w}
\def\tS{\tilde S}
\def\<{\langle}
\def\>{\rangle}
\theoremstyle{plain}
\newtheorem{thm}{Theorem}[section]
\newtheorem*{thm*}{Theorem}
 \newtheorem{prop}[thm]{Proposition}
 \newtheorem{lem}[thm]{Lemma}
\theoremstyle{definition}
\newtheorem*{rmk*}{Remark}
\newtheorem*{claim*}{Claim}
\begin{document}

\author{Xuhua He}
\address{Department of Mathematics, The Hong Kong University of Science and Technology, Clear Water Bay, Kowloon, Hong Kong}
\email{maxhhe@ust.hk}
\author{Zhongwei Yang}
\address{Department of Mathematics, The Hong Kong University of Science and Technology, Clear Water Bay, Kowloon, Hong Kong}
\email{yzw@ust.hk}
\title[Elements with finite Coxeter part in an affine Weyl group]{Elements with finite Coxeter part in an affine Weyl group}

\begin{abstract}Let $W_a$ be an affine Weyl group and $\eta:W_a\longrightarrow W_0$ be the natural projection to the corresponding finite Weyl group. We say that $w\in W_a$ has finite Coxeter part if $\eta(w)$ is conjugate to a Coxeter element of $W_0$. The elements with finite Coxeter part is a union of conjugacy classes of $W_a$. We show that for each conjugacy class $\co$ of $W_a$ with finite Coxeter part there exits a unique maximal proper parabolic subgroup $W_J$ of $W_a$, such that the set of minimal length elements in $\co$ is exactly the set of Coxeter elements in $W_J$. Similar results hold for twisted conjugacy classes.
\end{abstract}

\maketitle

\section*{Introduction}
In \cite{GP}, Geck and Pfeiffer showed that elements of minimal length in the conjugacy classes of finite Weyl groups play a quite special role. The results on minimal length elements have a lot of applications in representation theory of finite Hecke algebra and algebraic groups, as well as the geometry of unipotent classes.

Recently, the first author, joint with Nie \cite{He2}, \cite{HN} studied minimal length elements in the conjugacy classes of affine Weyl groups and showed that these elements also play a special role. It is expected that the minimal length elements will have applications in representation theory of affine Hecke algebra and p-adic groups, as well as reduction of Shimura varieties.

Although the proof of \cite{HN} is case-free, it is still useful to have concrete data available for each conjugacy class. In this paper, we study some special conjugacy classes of affine Weyl groups and give an explicit description of the minimal length elements in these conjugacy classes. We show that the minimal length elements in a conjugacy class with finite Coxeter part (see $\S$\ref{finCox} for the precise definition) are exactly the Coxeter elements for a unique maximal proper parabolic subgroup of the affine Weyl group. The precise statement for this ``Coxeter=Coxeter'' theorem is Theorem \ref{main}.

This result is also a necessary ingredient of in the study of dimension formula of affine Deligne-Lusztig varieties. See \cite{GH} and \cite{H99}.

\section{The main Theorem}

\subsection{}\label{setup1} Let $S$ be a finite set and $(m_{i j})_{i, j \in S}$ be a matrix with entries in $\mathbb N \cup \{\infty\}$ such that $m_{i i}=1$ and $m_{i j}=m_{j i} \ge 2$ for all $i \neq j$. Let $W$ be a group defined by generators $s_i$ for $i \in S$ and relations $(s_i s_j)^{m_{i j}}=1$ for $i, j \in S$ with $m_{i j}< \infty$. We say that $(W, S)$ is a {\it Coxeter group}. Sometimes we just call $W$ itself a Coxeter group.

Let $H$ be a group of automorphisms of the group $W$ that preserves $S$. Set $W'=W\rtimes H$. Then an element in $W'$ is of the form $w \d$ for some $w \in W$ and $\d \in H$. We have that $(w \d) (w' \d')=w \d(w') \d \d' \in W'$ with $\d, \d' \in H$.

For $w \in W$ and $\d \in H$, we set $\ell(w \d)=\ell(w)$, where $\ell(w)$ is the length of $w$ in the Coxeter group $(W,S)$. Thus $H$ consists of length $0$ elements in $W'$.

For $J \subset S$, we denote by $W_J$ the standard parabolic subgroup of $W$ generated by $s_j$ for $j\in J$ and by $W^J$ (resp. ${}^J W$) the set of minimal coset representatives in $W/W_J$ (resp. $W_J \backslash W$).

For $J \subset S$ with $W_J$ finite, we denote by $w^J_0$ the maximal element in $W_J$.

\subsection{} For $w \in W$, we denote by $\supp(w)$ the set of $i\in S$ such that $s_i$ appears in some (or equivalently, any) reduced expression of $w$. For $w \in W$ and $\d \in H$, we set $\supp(w \d)=\cup_{n \in \ZZ} \d^n(\supp(w))$. Then $\supp(w \d)$ is the minimal $\d$-stable subset $J$ of $S$ such that $w \d \in W_J \rtimes \<\d\> \subset W'$.

We follow \cite[7.3]{Sp}. Let $\d \in H$. For each $\delta$-orbit in $S$, we pick a simple reflection. Let $g$ be the product of these simple reflections (in any order) and put $c=g \d \in W'$. We call $c$ a {\it Coxeter element} of $W'$. Then $\supp(c)=S$ for any Coxeter element $c$ of $W'$.

\subsection{} Let $\Phi$ be an irreducible reduced root system and $W_0$ be the corresponding finite Weyl group. Then $(W_0, S_0)$ is a Coxeter group, where $S_0=$\{$i$ : $s_i$ is a simple reflection in $W_0$\}.

Let $P^\vee$ be the coweight lattice and $Q^\vee$ be the coroot lattice. Let $$W_a=Q^\vee \rtimes W_0=\{t^\chi w; \chi \in Q^\vee, w \in W_0\}$$ be the associated affine Weyl group and $$\tW=P^\vee \rtimes W_0=\{t^\chi w; \chi \in P^\vee, w \in W_0\}$$ be the associated extended affine Weyl group. The multiplication is given by the formula $(t^\chi w) (t^{\chi'} w')=t^{\chi+w \chi'} w w'$.

Set $\tS=S_0 \cup \{0\}$ and $s_0=t^{\th^\vee} s_\th$, where $\th$ is the corresponding largest positive root. Then $W_a$ is a normal subgroup of $\tW$ and is a Coxeter group with generators $s_i$ (for $i \in \tS$).

Following \cite{IM65}, we define the length function on $\tW$ by $$\ell(t^\chi w)=\sum_{\a \in \Phi^+, w \i(\a) \in \Phi^+} |\<\chi, \a\>|+\sum_{\a \in \Phi^+, w \i(\a) \in \Phi^-} |\<\chi, \a\>-1|.$$ For any coset of $W_a$ in $\tW$, there is a unique element of length $0$. Moreover, there is a natural group isomorphism between $\Om=\{\t \in \tW; \ell(\t)=0\}$ and $\tW/W_a \cong P^\vee/Q^\vee$.

\subsection{} Let $\d$ be a diagram automorphism of $(W_0, S_0)$ and $\<\d\>$ be the group of automorphisms on $W_0$ generated by $\d$. Set $$W_0'=W_0 \rtimes \<\d\>.$$ Notice that $\d$ induces natural actions on $Q^\vee$, $P^\vee$, $W_a$ and $\tW$, which we still denote by $\d$. It also gives a bijection on $\tS$ which sends $S_0$ to $S_0$ and sends $0 \in \tS$ to $0$. Set $$\tW'=P^\vee \rtimes W_0'=\tW \rtimes \<\d\>.$$ Then $\Om'=\Om \rtimes \<\d\>$ is the set of length $0$ elements in $\tW'$ and $\tW'=W_a \rtimes \Om'$.

\subsection{} Define the action of $W_0$ on $W_0'$ by $w \cdot w'=w w' w \i$. Each orbit of $W_0$ is called a {\it $W_0$-conjugacy class} of $W_0'$. We define $W_a$-conjugacy classes and $\tW$-conjugacy classes of $\tW'$ in the same way. Notice that $W_a$ is a normal subgroup of $\tW'$. Thus each $W_a$-conjugacy class of $\tW'$ is contained in $W_a \t$ for some $\t \in \Om'$.

Let $\eta: \tW' \to W_0'$ be the projection map, i.e., $\eta(t^\chi w)=w$ for any $\chi \in P^\vee$ and $w \in W'_0$. For any $\tw \in \tW'$, we call $\eta(\tw)$ the {\it finite part} of $\tw$.

It is easy to see that $\eta$ sends a $\tW$-conjugacy class of $\tW'$ to a $W_0$-conjugacy class of $W'_0$.

\subsection{}\label{finCox} It is known that any two Coxeter elements of $W'_0$ in the same coset $W'_0/W_0$ are conjugated by an element of $W_0$.

Let $\co$ be a $W_a$-conjugacy class of $\tW'$ and $\co'$ be a $\tW$-conjugacy class of $\tW'$. We say that $\co$ (resp. $\co'$) {\it has finite Coxeter part} if $\eta(\co)$ (resp. $\eta(\co')$) contains a Coxeter element of $W'_0$. The purpose of this paper is to give an explicit description of the minimal length element in $\co$. We prove the following ``Coxeter=Coxeter'' theorem.

\begin{thm}\label{main}
Let $\co$ be a $W_a$-conjugacy class of $\tW'$ with finite Coxeter part and $\co_{\min}$ be the set of minimal length elements in $\co$. Let $\t \in \Om'$ with $\co \subset W_a \t$. Then there exists a unique maximal proper $\t$-stable subset $J$ of $\tS$ such that $\co_{\min}$ is the set of Coxeter elements of $W_J \rtimes \<\t\>$ that are contained in $W_J \t \subset W_J \rtimes \<\t\>$. Here we embed $W_J \rtimes \<\t\>$ into $\tW'$ in a natural way.
\end{thm}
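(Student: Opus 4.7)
My plan is to reduce the problem to an explicit analysis of elements with a fixed Coxeter finite part, using the general theory of minimal length elements in affine Weyl group conjugacy classes from \cite{HN} as a black box.

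\textbf{Step 1: Normalize the finite part.} By the fact quoted in \S\ref{finCox}, any two Coxeter elements in the same coset $W_0\t$ of $W_0'$ are $W_0$-conjugate. Conjugating by a suitable element of $W_0 \subset W_a$, I may assume that every element of $\co$ has the form $t^\chi c$ for a single fixed Coxeter element $c = v\t \in W_0'$. A Coxeter element acts on $V := Q^\vee \otimes_\ZZ \RR$ with no nonzero fixed vector, so $1-c$ is invertible on $V$, and $(1-c)(Q^\vee)$ is a sublattice of finite index in $Q^\vee$. The translation-conjugation formula $t^\lambda (t^\chi c) t^{-\lambda} = t^{\chi+(1-c)\lambda} c$ shows that $\co$ corresponds to a single coset of $(1-c)(Q^\vee)$ in $Q^\vee$, so there are only finitely many $W_a$-conjugacy classes above $c$, matching up with the finite group $Q^\vee/(1-c)(Q^\vee)$.

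\textbf{Step 2: Produce a parabolic Coxeter representative.} Within its $(1-c)$-coset I would choose $\chi$ to minimize $\ell(t^\chi c)$ using the explicit length formula of \S\ref{setup1}. The geometric content is that for a suitable choice, $t^\chi c$ lands in $W_J\t$ for some $\t$-stable $J \subsetneq \tS$; a direct length computation then identifies $t^\chi c$ as a Coxeter element of $W_J \rtimes \<\t\>$, so its length equals the number of $\t$-orbits on $J$. To see that the minimum is attained at a \emph{maximal} proper $\t$-stable $J$ (and not at some smaller parabolic), I would argue that shrinking $J$ further would drop at least one full $\t$-orbit's worth of length, but the fixed-point-freeness of $c$ forces the $(1-c)(Q^\vee)$-coset of $\chi$ to have nontrivial intersection with the cones dual to maximal proper $\t$-stable subsets and no further reduction is possible. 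Existence of $J$ with the stated property follows, together with one distinguished minimal length element.

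\textbf{Step 3: Characterize $\co_{\min}$ and uniqueness.} By the He--Nie reduction theorem \cite{HN}, every element of $\co_{\min}$ is linked to the representative produced in Step 2 by a sequence of length-preserving cyclic shifts; I would verify that such shifts preserve both $\supp$ and the property of being a parabolic Coxeter element, so every element of $\co_{\min}$ is a Coxeter element of $W_J \rtimes \<\t\>$ inside $W_J\t$. Conversely, the Springer observation from \S\ref{setup1} applied to the finite Coxeter pair $(W_J, J)$ with automorphism $\t$ shows all Coxeter elements of $W_J \rtimes \<\t\>$ in $W_J\t$ are $W_J$-conjugate, hence lie in $\co$ with the common minimal length $|\tS/\<\t\>|-1$. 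Uniqueness of $J$ is forced by maximality combined with the fact that different maximal proper $\t$-stable subsets of $\tS$ give Coxeter elements with distinct supports, which cannot coexist in a single $W_a$-conjugacy class of minimal length elements since cyclic shifts preserve support.

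The main obstacle is the maximality assertion in Step 2: ruling out the possibility that a smaller $\t$-stable parabolic can accommodate a representative of $\co$. This requires a careful analysis of how the $(1-c)$-lattice meets the cones of $V$ dual to $\t$-stable subsets of $\tS$, and is where the fixed-point-free property of $c$ plays its sharpest role.
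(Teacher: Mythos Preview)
Your Steps~1 and~3 closely track the paper's own argument: the normalization of the finite part appears in the proof of Proposition~\ref{class}, and Step~3 is essentially the content of \S4, using the He--Nie reduction (Theorem~\ref{HN}) together with the observation that length-preserving cyclic shifts send parabolic Coxeter elements to parabolic Coxeter elements with the same support. Your uniqueness argument via $J=\supp(w)$ is exactly the paper's.

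The genuine gap is Step~2, and it is more serious than the maximality issue you flag at the end. You propose to minimize $\ell(t^\chi c)$ over a $(1-c)Q^\vee$-coset and assert that the minimizer lands in $W_J\t$ for some proper $\t$-stable $J$ and is moreover a Coxeter element there; but no mechanism is given for either claim. The phrase ``the $(1-c)(Q^\vee)$-coset of $\chi$ has nontrivial intersection with the cones dual to maximal proper $\t$-stable subsets'' is neither made precise nor proved, and the Iwahori--Matsumoto length formula alone does not obviously yield it. The paper does \emph{not} attempt a uniform argument here: it proves a key lemma (Lemma~\ref{key}) by explicit case-by-case verification across all irreducible types and all outer automorphisms (Section~3), exhibiting in each case a specific maximal proper $\t\d'$-stable $J\subset\tS$ and a specific $c\in W_0$ such that $\t\d' c$ is a Coxeter element of $W_J\rtimes\langle\t\d'\rangle$ with finite part $W_0$-conjugate to a Coxeter element of $W_0'$. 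Combined with the classification of $\tW$-conjugacy classes in Proposition~\ref{class}, this places a parabolic Coxeter representative in every class. Once that representative is in hand, maximality of $J$ comes for free in \S4, because He--Nie forces every minimal-length element to share its support with the constructed $c_J$.

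So the architecture of your outline is correct, but the load-bearing existence step is exactly the one you have not carried out, and the paper's own proof offers no shortcut around the type-by-type check.
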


\begin{rmk*}
For type $A$, it is first proved by the first author in \cite{He2}.
\end{rmk*}

\subsection{} Before proving the theorem, we first explain why a $W_a$-conjugacy class of $\tW'$ with finite Coxeter part does not contain a Coxeter element of $\tW'$ and hence why we need proper subset of $\tS$ in the theorem. Although it is not needed in the proof, it serves as a motivation for the theorem.

Let $t^\chi w \in \co$ with $\chi \in P^\vee$ and $w$ a finite Coxeter element of $W'_0$. Let $n$ be the order of $w$ in $W'_0$. It is known that the action of $1-w$ on $P^\vee \otimes_{\mathbb Q} {\mathbb C}$ is invertible. Hence $$(t^\chi w)^n=t^{\chi+w \chi+\cdots+w^{n-1} \chi} w^n=t^{\frac{1-w^n}{1-w} \chi}=1.$$ Therefore $t^\chi w$ is of finite order and hence any element in $\co$ is of finite order.

On the other hand, it is proved in \cite[Theorem 1]{Spe} (for untwisted case) and \cite[Proposition 3.1]{HN} that any Coxeter element of $\tW'$ is of infinite order. Hence $\co$ doesn't contain a Coxeter element of $\tW'$.

\section{existence of $J$}

\subsection{} Let $\co'$ be a $\tW$-conjugacy class of $\tW'$. Then $\displaystyle{\co'=\bigsqcup^r_{i=1}\co_i}$ is a disjoint union of $W_a$-conjugacy classes of $\tW'$. Since $\tW=W_a\rtimes\Om$, $\Om$ acts transitively on $\{\co_1, \co_2,\cdots,\co_r\}$. Moreover, if $\co_i=\t\co_j\t^{-1}$ for some $\t\in\Om$, then $(\co_i)_{min}=\t(\co_j)_{min}\t^{-1}$.
\subsection{}Let $\co'$ be a $\tW$-conjugacy class of $\tW'$ with finite Coxeter part, and let $\co$ be a $W_a$-conjugacy class of $\tW'$ with $\co\subset\co'$. The main purpose of this section is to show the ``existence'' part of the Theorem 1.1 for $\co'$ instead of $\co$. More precisely, there exists $\t \in \Om'$ and a maximal proper $\t$-stable subset $J$ of $\tS$ and a Coxeter element $c_J$ of $W_J\rtimes\langle\t\rangle $ such that $c_J\in\co'$.

By $\S2.1$, there exists $\s\in\Om$ such that $\s c_J\s^{-1}\in\co$. It is easy to see that $\s c_J\s^{-1}$ is a Coxeter element of $W_{\s(J)}\rtimes\langle \s\t\s^{-1} \rangle $. Thus the ``existence'' part of the theorem for $\tW$-conjugacy class $\co'$ deduces the ``existence'' part of it for $W_a$-conjugacy class $\co$.

Compared with $W_a$-conjugacy classes, it is much easier to classify $\tW$-conjugacy classes with finite Coxeter part and to find representatives. This is the reason that we consider $\tW$-conjugacy classes instead of $W_a$-conjugacy classes in this section.

\subsection{} We identify $\tW/W_a$ with $P^\vee/Q^\vee$ in the natural way. Let $\d$ be a diagram automorphism of $(W_0, S_0)$. Then $\<\d\>$ acts on $\tW/W_a \cong P^\vee/Q^\vee$. Let $(P^\vee/Q^\vee)_\d$ be the $\d$-coinvariant of $P^\vee/Q^\vee$. Let $$\k_\d: \tW\d \to(P^\vee/Q^\vee)_\d, \qquad w \mapsto w \d \i W_a$$ be the natural projection. We call $\k_\d$ the {\it Kottwitz map}.

The following result classifies the $\tW$-conjugacy classes of $\tW'$ with finite Coxeter part.

\begin{prop}\label{class}
We keep the assumption as above. Let $\co_0 \subset W_0 \d$ be a $W_0$-conjugacy class containing a Coxeter element of $W'_0$. Then for any $v \in(P^\vee/Q^\vee)_{\d}$, $\eta^{-1}(\co_0)\bigcap\k^{-1}_\d(v)$ is a single $\tW$-congugacy class of $\tW'$.
\end{prop}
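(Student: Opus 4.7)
The plan is to prove two things: first, that the Kottwitz map $\k_\d$ is constant on $\tW$-conjugacy classes, and second, that within $\eta\i(\co_0)$, matching Kottwitz invariants is the only obstruction to being $\tW$-conjugate. Nonemptiness of $\eta\i(\co_0)\cap\k_\d\i(v)$ is immediate: pick any Coxeter representative $c\d\in\co_0$ and any lift $\mu\in P^\vee$ of $v$, and $t^\mu c\d$ lies in the intersection.

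For the well-definedness of $\k_\d$ on $\tW$-conjugacy classes, a direct calculation in $\tW'$ yields
\[
(t^\chi w)(t^\mu u\d)(t^\chi w)\i \;=\; t^{w\mu+(1-\s\d)\chi}\,\s\d, \qquad \s=wu\d(w\i).
\]
Since $W_0$ acts trivially on $P^\vee/Q^\vee$, the element $w\mu-\mu=(w-1)\mu$ lies in $Q^\vee$, and $\s\d$ acts on $P^\vee/Q^\vee$ as $\d$ does, whence $(1-\s\d)\chi\equiv(1-\d)\chi\pmod{Q^\vee}$. Thus the translation part of the conjugate agrees with $\mu$ modulo $Q^\vee+(1-\d)P^\vee$, and $\k_\d$ is preserved.

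For the uniqueness of the class, take $\tw_1,\tw_2 \in \eta\i(\co_0)\cap\k_\d\i(v)$. Their finite parts are $W_0$-conjugate, so first conjugating $\tw_1$ by an appropriate element of $W_0$ (which preserves $\k_\d$ by the previous step) reduces to the case $\eta(\tw_1)=\eta(\tw_2)=c\d$ for one fixed Coxeter element $c\d\in\co_0$. Writing $\tw_i=t^{\mu_i}c\d$, the hypothesis becomes $\mu_1-\mu_2\in Q^\vee+(1-\d)P^\vee$. Conjugation of $\tw_1$ by $t^\chi\in\tW$ preserves the finite part and replaces $\mu_1$ by $\mu_1+(1-c\d)\chi$, so the whole proof reduces to the following identity, which I expect to be the main obstacle:
\[
(1-c\d)\,P^\vee \;=\; Q^\vee+(1-\d)P^\vee.
\]
The inclusion $\subseteq$ is easy, since $c\in W_0$ acts trivially on $P^\vee/Q^\vee$ and so $1-c\d$ acts there as $1-\d$. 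For the reverse inclusion, one compares indices: $[P^\vee:(1-c\d)P^\vee]=|\det_V(1-c\d)|$ (finite because a Coxeter element of $W_0'$ has no fixed vector on $V=\RR\otimes P^\vee$), while $[P^\vee:Q^\vee+(1-\d)P^\vee]=|(P^\vee/Q^\vee)_\d|$. The equality of these two integers is the twisted analogue of the classical identity $\det_V(1-c)=[P^\vee:Q^\vee]$ for Coxeter elements in an untwisted finite Weyl group, and can be extracted either from Springer's theory of regular elements applied to $c\d\in W_0'$, or by a finite case-by-case verification using the standard description of Coxeter elements of twisted types. Granted the lemma, choose $\chi\in P^\vee$ with $(1-c\d)\chi=\mu_2-\mu_1$; then $t^\chi\tw_1 t^{-\chi}=\tw_2$, finishing the argument.
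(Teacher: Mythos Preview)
Your reduction is exactly the paper's: after normalizing the finite parts by a $W_0$-conjugation, everything comes down to the identity $(1-c\d)P^\vee = Q^\vee + (1-\d)P^\vee$, and you handle the inclusion $\subseteq$ the same way. The difference is in the reverse inclusion. You propose an index comparison, reducing to $|\det_V(1-c\d)| = |(P^\vee/Q^\vee)_\d|$, and then defer this to Springer's theory of regular elements or to a case check. This is where your argument is incomplete: the identity is true, but it is not an immediate corollary of Springer's description of the eigenvalues of a twisted Coxeter element, and carrying out a case-by-case verification would be more work than is actually needed here.

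The paper instead proves the reverse inclusion by a short, uniform, elementary computation. Write $c=s_{i_1}\cdots s_{i_k}$ with $\{i_1,\ldots,i_k\}$ a set of representatives for the $\d$-orbits on $S_0$. Applying $1-c\d$ to the $\d$-orbit sum $\o^\vee_{i_j}+\o^\vee_{\d(i_j)}+\cdots+\o^\vee_{\d^{r_j}(i_j)}$ kills the $\d$-part and leaves $(1-c)\o^\vee_{i_j}=s_{i_1}\cdots s_{i_{j-1}}\a^\vee_{i_j}$, so each $\a^\vee_{i_j}$ lies in $(1-c\d)P^\vee$. For arbitrary $m\in S_0$ one then computes $(1-c\d)\a^\vee_m \equiv \a^\vee_m-\a^\vee_{\d(m)}$ modulo $\sum_j \ZZ\a^\vee_{i_j}$, and inducting along each $\d$-orbit gives $\a^\vee_m\in(1-c\d)P^\vee$ for all $m$. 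Hence $Q^\vee\subset(1-c\d)P^\vee$; passing to $P^\vee/Q^\vee$, where $c$ acts trivially, yields the full identity. No determinants and no case analysis are required, and this replaces your deferred step completely.
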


\begin{proof} Let $\mu \in P^\vee$ such that the image of $\mu$ under the map $P^\vee \to (P^\vee/Q^\vee)_{\d}$ is $v$. Let $c \d \in \co_0$. Then $t^\mu c \d \in \eta^{-1}(\co_0)\cap\k^{-1}_\d(v)$. It is easy to see that $\eta^{-1}(\co_0)\bigcap\k^{-1}_\d(v)$ is a union of $\tW$-conjugacy classes. Now we prove that $\tW$ acts transitively on $\eta^{-1}(\co_0)\bigcap\k^{-1}_\d(v)$.

Let $\mu' \in P^\vee$ and $c' \d \in \co_0$ such that $t^{\mu'} c' \d\in \eta^{-1}(\co_0)\bigcap\k^{-1}_\d(v)$. Then after conjugating by a suitable element of $W_0$, we may assume that $c'=c$. By definition, $\mu' \in \mu+(1-\d) P^\vee+Q^\vee$. Thus it suffices to show that

(a) $(1-\d) P^\vee+Q^\vee=(1-c \d) P^\vee$.

For any $\l \in P^\vee$, $(1-c \d) \l=(1-\d) \l+(1-c) \d(\l) \in (1-\d) P^\vee+Q^\vee$. Hence $(1-\d) P^\vee+Q^\vee \supset (1-c \d) P^\vee$.

We first prove that

(b) $Q^\vee \subset (1-c \d) P^\vee$.

We may assume that $c=s_{i_1} s_{i_2} \cdots s_{i_k}$. Since $c \d$ is a Coxeter element of $W'_0$, $\d$-orbits on $S_0$ are \[\{i_1, \d(i_1), \cdots, \d^{r_1}(i_1)\}, \{i_2, \d(i_2), \cdots, \d^{r_2}(i_2)\}, \cdots, \{i_k, \d(i_k), ... , \d^{r_k}(i_k)\}.\]

For $1 \le j \le k$, \begin{align*} (1-c\d)(\o^\vee_{i_j}+\o^\vee_{\d(i_j)}+\cdots+\o^\vee_{\d^{r_j}(i_j)}) &=(1-c)(\o^\vee_{i_j}+\o^\vee_{\d(i_j)}+\cdots+\o^\vee_{\d^{r_j}(i_j)}) \\ &=(1-c) \o^\vee_{i_j}=s_{i_1}s_{i_2}...s_{i_{j-1}}\a^\vee_{i_j}.\end{align*} Therefore $\{\a^\vee_{i_1}, \a^\vee_{i_2}, ... , \a^\vee_{i_k}\}\subset(1-c\d)P^\vee$.

For any $m \in S_0$, \[(1-c \d) \a^\vee_m=\a^\vee_m-\a^\vee_{\d(m)}+(1-c) \a^\vee_{\d(m)} \in \a^\vee_m-\a^\vee_{\d(m)}+\sum_{1 \le j \le k} \ZZ \a^\vee_{i_j}.\] Thus $\a^\vee_m-\a^\vee_{\d(m)} \in (1-c \d) P^\vee$ for all $m \in S_0$. Hence for $1 \le j \le k$ and $n \in \NN$, one may show by induction that $\a^\vee_{\d^n(i_j)} \in (1-c \d) P^\vee$.

(b) is proved.

Now $(1-c \d) P^\vee/Q^\vee=(1-\d) P^\vee/Q^\vee$. Thus (a) is proved.
\end{proof}

\subsection{} 

In order to prove the ``existence'' part of Theorem 1.1 for $\co'$, we need the following key lemma which will be proved in section 3 via a case-by-case analysis.

\begin{lem}\label{key}
Let $\d'$ be a diagram automorphism of $(W_0, S_0)$ and $\t=t^{\o^\vee_i}w^{S_0-\{i\}}_0w^{S_0}_0$, where $\o^\vee_i$ is a minuscule coweight. Then there exists a maximal proper $\t \d'$-stable subset $J$ of $\tS$ and $c \in W_0$ such that $\supp(\t \d' c)=J$ and $w^{S_0-\{i\}}_0w^{S_0}_0 \d'(c) \d'$ is conjugate to a Coxeter element of $W'_0$.
\end{lem}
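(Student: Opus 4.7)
The plan is a case-by-case verification indexed by the irreducible reduced root system $\Phi$, the choice of minuscule coweight $\o^\vee_i$, and the diagram automorphism $\d'$ of $(W_0,S_0)$. Minuscule coweights occur only in types $A_n$, $B_n$, $C_n$, $D_n$, $E_6$, $E_7$, and each type admits only a short list of diagram automorphisms, so only finitely many configurations need to be checked.

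In each case I would first determine how $\t\d'$ permutes $\tS$. Since $\t=t^{\o_i^\vee}w_0^{S_0-\{i\}}w_0^{S_0}$ is a length-zero element of $\tW$ whose image in $\Om\cong P^\vee/Q^\vee$ is represented by $\o_i^\vee$, conjugation by $\t\d'$ realizes a diagram automorphism of the affine Dynkin diagram. A maximal proper $\t\d'$-stable subset $J\subset\tS$ is precisely the complement of a single $\t\d'$-orbit, so I would enumerate the orbits and designate one whose removal is compatible with the later constructions.

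Next I would exhibit a candidate $c\in W_0$ as a carefully chosen product of simple reflections of $W_0$. The goal is that, after rewriting $\t\d' c=b\cdot\t\d'$ with $b=\t\d'(c)\t\i\in W_a$, each $\t\d'$-orbit in $J$ contributes exactly one simple reflection to a reduced expression of $b$ and no other affine generator appears, yielding $\supp(\t\d' c)=J$; while simultaneously the finite part $\eta(\t\d' c)=u\d'(c)\d'$ (with $u=w_0^{S_0-\{i\}}w_0^{S_0}$) is $W_0$-conjugate to a Coxeter element of $W_0'$, i.e.\ a product of one simple reflection from each $\d'$-orbit in $S_0$. The second condition is manageable because the combinatorics of $u$ in terms of the simple roots is explicit and well known from the minuscule case.

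The main obstacle is that these two conditions pull the choice of $c$ in different directions: the affine support condition constrains which reflections appear after conjugation by $\t\d'$, while the finite Coxeter condition constrains the simple reflections appearing directly in $c$, and the two must be reconciled case by case. Type $A_{n-1}$ is essentially uniform thanks to the cyclic symmetry of $\tilde A_{n-1}$ and serves as a warmup. The classical types $B_n$, $C_n$, $D_n$ each admit only a few minuscule coweights; the order-two diagram automorphism of $D_n$, and in particular the triality of $D_4$, will require extra care. The exceptional cases $E_6$ (with two minuscule coweights, interchanged by the nontrivial diagram automorphism) and $E_7$ (with a unique minuscule coweight) reduce to finite computations in small Coxeter groups. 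I would organize the write-up type by type, tabulating the pair $(J,c)$ and verifying both conditions by explicit inspection.
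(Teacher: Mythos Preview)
Your proposal is correct and follows essentially the same approach as the paper: a case-by-case analysis over the irreducible types admitting minuscule coweights (and their diagram automorphisms), in which one computes the $\t\d'$-orbits on $\tS$, removes one orbit to obtain $J$, and then exhibits an explicit product $c$ of simple reflections in $W_0$ making $\supp(\t\d' c)=J$ and the finite part $W_0$-conjugate to a Coxeter element of $W_0'$. Your remark that maximal proper $\t\d'$-stable subsets are exactly complements of single orbits is a useful organizing observation that the paper uses implicitly.
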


\subsection{} Now we prove the ``existence'' part of Theorem 1.1 for $\co'$.

Let $\t \in \Om$ and $\d' \in \<\d\>$ such that $\co' \cap W_a \t \d' \neq \emptyset$. If $\t=1$, then we may take $J=S_0$ and $c_J$ be any Coxeter element of $W_0 \d' \subset W'_0$.

If $\t \neq 1$, then $\t=t^{\o^\vee_i}w^{S_0-\{i\}}_0w^{S_0}_0$ for some minuscule coweight $\o^\vee_i$.  We take $J$ and $c$ from Lemma \ref{key}. Then $\t \d' c$ is a Coxeter element of $W_J \rtimes \<\t \d'\>$ and $\eta(\t \d' c)=w^{S_0-\{i\}}_0w^{S_0}_0 \d'(c) \d'$ is conjugate to a Coxeter element of $W'_0$. By Proposition \ref{class}, $\t \d' c \in \co'$.

\section{the key lemma}
In this section, we verify Lemma \ref{key}. We use the same labeling of Dynkin diagram as in \cite{Bo}.

\

{\bf Type $A_{n-1}$}

This case was proved by the first author in \cite[Lemma 5.1]{He2}.

\

{\bf Type ${}^2 A_{n-1}$}

We may regard $\d'$ as the permutation $w_0^{S_0}=(1\ n) (2\ \ n-1) \cdots$ in $S_n$ and regard $W_0 \d' \subset W_0'$ as $S_n$. Under this identification, the $W_0$ conjugacy class that contains a Coxeter element in $W_0 \d'$ is the set of $n$-cycles when $n$ is odd and is the set of $n-1$ cycles when $n$ is even.

Let $\t=\t_i$. Then $\t \d'$-orbits on $\tS$ are $\{0, i\}$, $\{j,i-j\}$ for $0<j<i$, and $\{i+j,n-j\}$ for $0<j<n-i$.

We have the following four different cases:

Case 1: $n$ is odd and $i$ is odd.

In this case, we take $J=\tS-\{\frac{n+i}{2}\}$ and $c=s_{\frac{i+1}{2}}s_{\frac{i+3}{2}} \cdots s_{\frac{n+i}{2}-1}$. Then $w_0^{S_0-\{i\}}c$ is an $n$-cycle. In other words, $w^{S_0-\{i\}}_0w^{S_0}_0 \d'(c) \d'$ is conjugate to a Coxeter element in $W_0 \d'$.

Case 2: $n$ is odd and $i$ is even.

In this case, we take $J=\tS-\{\frac{i}{2}\}$ and $c=s_{\frac{i}{2}+1}s_{\frac{i}{2}+2} \cdots s_{\frac{n+i-1}{2}}$. Then $w_0^{S_0-\{i\}}c$ is an $n$-cycle. In other words, $w^{S_0-\{i\}}_0w^{S_0}_0 \d'(c) \d'$ is conjugate to a Coxeter element in $W_0 \d'$.

Case 3: $n$ is even and $i$ is odd.

In this case, we take $J=\tS-\{\frac{i-1}{2}, \frac{i+1}{2}\}$ and $c=s_{\frac{i+3}{2}}s_{\frac{i+5}{2}} \cdots s_{\frac{n+i-1}{2}}$. Then $w_0^{S_0-\{i\}}c$ is an $n-1$ cycle. In other words, $w^{S_0-\{i\}}_0w^{S_0}_0 \d'(c) \d'$ is conjugate to a Coxeter element in $W_0 \d'$.

Case 4: $n$ is even and $i$ is even.

In this case, we take $J=\tS-\{\frac{n+i}{2}\}$ and $c=s_{\frac{i}{2}}s_{\frac{i}{2}+1} \cdots s_{\frac{n+i}{2}-1}$. Then $w_0^{S_0-\{i\}}c$ is an $n-1$ cycle. In other words, $w^{S_0-\{i\}}_0w^{S_0}_0 \d'(c) \d'$ is conjugate to a Coxeter element in $W_0 \d'$.

\

{\bf Type $B_n$}

There is only one minuscule coweight: $\o^\vee_1$. So $\t=\t_1$. Now $\t$-orbits on $\tilde S$ are $\{0, 1\}$ and $\{i\}$ for $2\leqslant i\leqslant n$. We take $J=\tS-\{n\}$ and $c=s_1s_2 \cdots s_{n-1}$. Then $w^{S_0-\{1\}}_0w^{S_0}_0 c$ is conjugate to a Coxeter element of $W_0$.\\
\

{\bf Type $C_n$}

There is only one minuscule coweight: $\o^\vee_n$. So $\t=\t_n$. Now $\t$-orbits on $\tilde S$ are $\{i, n-i\}$ for $0\leqslant i\leqslant n$.

If $n$ is odd, we take $J=\tS-\{0, n\}$ and $c=s_{\frac{n+1}{2}}s_{\frac{n+3}{2}} \cdots s_{n-1}$. Then $w^{S_0-\{n\}}_0w^{S_0}_0 c$ is conjugate to a Coxeter element of $W_0$.

If $n$ is even, we take $J=\tS-\{\frac{n}{2}\}$ and $c=s_{\frac{n}{2}+1}s_{\frac{n}{2}+2} \cdots s_n$. Then $w^{S_0-\{n\}}_0w^{S_0}_0 c$ is conjugate to a Coxeter element of $W_0$.

\

{\bf Type $D_n$}

There are three minuscule coweights: $\o^\vee_1,\ \o^\vee_{n-1},\ \o^\vee_n$. There is an outer diagram automorphism of $D_n$ permuting the last two coweights. Thus it suffices to consider the case where $\t=\t_1$ or $\t_n$.

Case 1: $\t=\t_1$.

The $\t$-orbits on $\tilde S$ are $\{0, 1\},\ \{n-1, n\}$ and $\{i\}$ for $2 \le i \le n-2$. We take $J=\tS-\{n-1, n\}$ and $c=s_1s_2 \cdots s_{n-2}$. Then $w^{S_0-\{1\}}_0w^{S_0}_0 c=s_1s_2 \cdots s_n$ is a Coxeter element of $W_0$.

Case 2: $\t=\t_n$ and $n$ is odd.

The $\t$-orbits on $\tilde S$ are $\{0, n, 1, n-1\}$ and $\{i, n-i\}$ for $2 \le i \le \frac{n-1}{2}$. We take $J=\tS-\{\frac{n-1}{2}, \frac{n+1}{2}\}$ and $c=s_{\frac{n+3}{2}}s_{\frac{n+5}{2}} \cdots s_{n-2}s_n$. Then $w^{S_0-\{n\}}_0w^{S_0}_0 c$ is conjugate to a Coxeter element of $W_0$.

Case 3: $\t=\t_n$ and $n$ is even.

The $\t$-orbits on $\tilde S$ are $\{i, n-i\}$ for $0 \le i \le \frac{n}{2}$. We take $J=\tS-\{0, n\}$ and $c=s_{\frac{n}{2}}s_{\frac{n}{2}+1} \cdots s_{n-1}$. Then $w^{S_0-\{n\}}_0w^{S_0}_0 c$ is conjugate to a Coxeter element of $W_0$.

\

{\bf Type ${}^2 D_n$}

As explained above, it suffices to consider the following three cases.

Case 1: $\t=\t_1$.

The $\t \d'$-orbits on $\tilde S$ are $\{0, 1\}$ and $\{i\}$ for $2 \le i \le n$. We take $J=\tS-\{n\}$ and $c=s_1s_2 \cdots s_{n-2}s_{n-1}$. Then $w^{S_0-\{1\}}_0w^{S_0}_0 \d'(c) \d'=s_1s_2 \cdots s_{n-2} s_{n-1} \d'$ is a Coxeter element in $W_0 \d'$.

Case 2: $\t=\t_n$ and $n$ is odd.

The $\t \d'$-orbits on $\tilde S$ are $\{i, n-i\}$ for $0 \le i \le \frac{(n-1)}{2}$. We take $J=\tS-\{0, n\}$ and $c=s_{\frac{n+1}{2}}s_{\frac{n+3}{2}} \cdots s_{n-2} s_{n-1}$. Then $w^{S_0-\{n\}}_0w^{S_0}_0 \d'(c) \d'$ is conjugate to a Coxeter element in $W_0 \d'$.

Case 3: $\t=\t_n$ and $n$ is even.

The $\t \d'$-orbits on $\tilde S$ are $\{0, 1, n-1, n\}$ and $\{i, n-i\}$ for $2 \le i \le \frac{n}{2}$. We take $J=\tS-\{\frac{n}{2}\}$ and $c=s_{\frac{n}{2}+1}s_{\frac{n}{2}+2} \cdots s_{n-2} s_{n-1}$. Then $w^{S_0-\{n\}}_0w^{S_0}_0 \d'(c) \d'$ is conjugate to a Coxeter element in $W_0 \d'$.

\

{\bf Type ${}^3 D_4$}

Without loss of generality, we may assume that $\d'$ is the outer diagram automorphism on $D_4$ sending $s_1$ to $s_3$, $s_3$ to $s_4$ and $s_4$ to $s_1$. As $\<\d'\>$ acts transitively on $\{1, 3, 4\}$, it suffices to consider the case where $\t=\t_1$.

In this case, the $\t \d'$-orbits on $\tS$ are $\{0, 1, 4\}, \{2\}, \{3\}$. We take $J=\tS-\{3\}$ and $c=s_2 s_1$, then $w^{S_0-\{1\}}_0w^{S_0}_0 \d'(c) \d'$ is conjugate to a Coxeter element in $W_0 \d'$.

\

{\bf Type $E_6$}

There are two minuscule coweights: $\o^\vee_1$ and $\o^\vee_6$. The unique outer diagram automorphism of $E_6$ permutes these two coweights. Thus it suffices to consider the case where $\t=\t_1$. In this case, $\t$-orbits on $\tilde S$ are $\{0, 1, 6\}, \{2, 3, 5\}, \{4\}$. We take $J=\tS-\{0, 1, 6\}$ and $c=s_4 s_5$. Then $w^{S_0-\{1\}}_0w^{S_0}_0 c$ is conjugate to a Coxeter element of $W_0$.

\

{\bf Type ${}^2 E_6$}

As explained above, it suffices to consider the case where $\t=\t_1$. In this case, $\t \d'$-orbits on $\tilde S$ are $\{0, 1\}, \{2, 3\}, \{4\}, \{5\},  \{6\}$. We take $J=\tS-\{6\}$ and $c=s_5 s_4 s_3 s_1$. Then $w^{S_0-\{1\}}_0w^{S_0}_0 \d'(c) \d'$ is conjugate to a Coxeter element of $W'_0$.

\

{\bf Type $E_7$}

There is a unique minuscule coweight: $\o^\vee_7$. So $\t=\t_7$. In this case, $\t$-orbits on $\tilde S$ are $\{0, 7\}, \{1, 6\}, \{3, 5\}, \{2\}, \{4\}$. We take $J=\tS-\{0, 7\}$ and $c=s_2 s_4 s_5 s_6$. Then $w^{S_0-\{7\}}_0w^{S_0}_0 c$ is conjugate to a Coxeter element of $W_0$.
\


\section{Proof of the main theorem}

\subsection{} We keep the notation in section 1. For any $w, w' \in \tW'$ and $i \in \tS$, we write $w \xrightarrow{s_i}w'$ if $w'=s_i w s_i$ and $\ell(w') \le \ell(w)$. We write $w \to w'$ if there is a sequence of $w=w_0, w_1, \cdots, w_n=w'$ of elements in $\tW'$ such that for any $k\in\{0, 1, \cdots, n-1\}$, $w_k \xrightarrow{s_i} w_{k+1}$ for some $i \in \tS$. We write $w \approx w'$ if $w \to w'$ and $w' \to w$.

The following result is proved in \cite{HN}.

\begin{thm}\label{HN}
Let $\co$ be a $W_a$-conjugacy class of $\tW'$ with finite Coxeter part and $\co_{\min}$ be the set of minimal length elements in $\co$. Then for any $w \in \co$ and  $w' \in \co_{\min}$, $w \to w'$.
\end{thm}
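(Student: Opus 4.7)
The statement packages two reachability claims: every $w \in \co$ can be $\to$-reduced to \emph{every} $w' \in \co_{\min}$, not merely to some. The plan is to decompose this into (a) a ``downward'' step, showing each $w \in \co$ reaches some $w_0 \in \co_{\min}$ via $\to$, and (b) a ``lateral'' step, showing any two elements of $\co_{\min}$ are $\approx$-equivalent. Then $w \to w_0 \to w'$ delivers the claim.

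For (a), I would induct on $\ell(w)$. The induction step requires an affine analogue of Geck--Pfeiffer's reduction lemma: if $w$ is not of minimal length in $\co$, then there exist $w_1 \in \co$ with $w \approx w_1$ and $i \in \tS$ such that $\ell(s_i w_1 s_i) < \ell(w_1)$. Here the finite Coxeter part hypothesis enters critically via \S1.7: every element of $\co$ has finite order, so $1 - \eta(w)$ is invertible on $P^\vee \otimes_\ZZ \RR$ and $w$ acts on the standard apartment with a unique fixed point $p$. The pointwise stabilizer $W_p$ of $p$ in the affine reflection group is a finite Coxeter group containing $w$, and Geck--Pfeiffer's reduction applied inside $W_p$ produces a decreasing sequence of cyclic shifts; the task is then to realize each such local shift by a sequence of cyclic shifts in $\tW'$ along simple reflections $s_i$, $i \in \tS$.

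For (b), all elements of $\co_{\min}$ share the same length, so any $\to$-sequence between them is automatically an $\approx$-chain. I would argue that length-preserving cyclic shifts act transitively on $\co_{\min}$ as follows. All minimal length elements of $\co$ fix some point $p$ (up to $W_a$-translation) and are of minimal length inside the finite Coxeter group $W_p$; the Geck--Pfeiffer theorem for $W_p$ then gives $\approx$-equivalence within $W_p$, which lifts to $\tW'$ provided one checks that local cyclic shifts in $W_p$ can be implemented by length-preserving cyclic shifts along simple reflections in $\tS$.

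The main obstacle is the lifting question underlying both (a) and (b): a local cyclic shift by a reflection through $p$ need not coincide with a single cyclic shift by some $s_i \in \tS$; it will typically be a composition of several simple-reflection cyclic shifts, and one must verify at each step that length does not increase. This compatibility of the ``local'' length function on $W_p$ with the ``global'' length function on $\tW'$ is the technical heart of the argument; the natural obstruction would be an element whose translation part cannot be pushed into a parabolic Coxeter configuration, and the finite Coxeter part hypothesis is exactly what rules this out via the invertibility of $1-\eta(w)$. Once the compatibility is in place, Geck--Pfeiffer's machinery for finite Coxeter groups handles both the descent in (a) and the transitivity in (b).
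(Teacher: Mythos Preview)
The paper does not prove this theorem; it quotes it from \cite{HN}. So there is no in-paper argument to compare against---your proposal must be measured against the actual proof in that reference.

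Your decomposition into (a) descent to $\co_{\min}$ and (b) transitivity on $\co_{\min}$ is sound, and you rightly extract from \S1.7 that each $w \in \co$ has a unique fixed point $p$ in the apartment. But the gap you flag is not a residual technicality: it is the content of the theorem. The stabilizer $W_p$ is a finite reflection group generated by the reflections through the walls containing $p$, and unless $p$ lies in the closure $\bar A$ of the fundamental alcove these generators are \emph{not} the simple reflections $s_i$, $i \in \tS$, nor does the intrinsic length on $W_p$ agree with the restriction of $\ell$ from $W_a$. Hence Geck--Pfeiffer run inside $W_p$ produces cyclic shifts and a minimality notion that bear no a priori relation to $\to$ on $\tW'$ or to $\co_{\min}$. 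Your sentence ``realize each such local shift by a sequence of cyclic shifts along simple reflections $s_i$, $i \in \tS$'' is precisely the statement to be proved, and the sketch offers no mechanism for it; the assertion that ``the finite Coxeter part hypothesis is exactly what rules this out via the invertibility of $1-\eta(w)$'' is not substantiated---invertibility gives you the fixed point, not the length compatibility. Step (b) has the additional problem that you have not shown all elements of $\co_{\min}$ lie in a common $W_p$; there is no single finite Coxeter group to work in until one already knows every minimal element can be $\approx$-moved into a fixed standard parabolic $W_J \t$, which again presupposes the lifting. The argument in \cite{HN} does not route through $W_p$; it shows directly, via alcove geometry, that any $w$ either already lies in some proper $W_J \t$ or admits a simple $s_i$ with $w \to s_i w s_i$ making progress toward that situation, and only then invokes the finite Geck--Pfeiffer theorem inside $W_J$, where the two length functions genuinely coincide.
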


\subsection{} Let $\co$ be a $W_a$-conjugacy class of $\tW'$ with finite Coxeter part and let $\t\in\Om'$ with $\co\subset W_a\t$. In section 2, we have proved that there exists a maximal proper $\t$-stable subset $J$ of $\tS$ and a Coxeter element $c_J$ of $W_J \rtimes \<\t\>$ such that $c_J \in \co$.

Let $w$ be a minimal length element in $\co$. By Theorem \ref{HN}, $c_J \to w$. Since $c_J$ is a Coxeter element of $W_J \rtimes \<\t\>$, $w$ is also a Coxeter element of $W_J \t \subset W_J \rtimes \<\t\>$ and $c_J \approx w$.

Since $J$ is a proper subset of $\tS$, $W_J \rtimes \<\t\>$ is a finite group. Hence any two Coxeter element of $W_J \t$ are conjugated by an element of $W_J$. Thus all the Coxeter elements of $W_J \t$ are contained in $\co$.

Therefore $\co_{\min}$ is the set of Coxeter elements in $W_J \t \subset W_J \rtimes \<\t\>$.

Moreover, $J=\supp(w)$ for any $w \in \co_{\min}$. This proves the uniqueness of $J$.


\bibliographystyle{amsalpha}

\end{document}